\documentclass[11pt]{article}
\usepackage{amsmath,authblk}
\usepackage{mathrsfs}
\usepackage{amsfonts}

\usepackage{amsfonts, amsmath, amssymb}
\usepackage{amssymb,amsfonts,amsmath,
latexsym, epsfig,cite, psfrag,eepic,colordvi,color}
\usepackage{amscd,graphics}
\usepackage{lineno}
\allowdisplaybreaks

\newcommand{\qed}{\hfill $\Box $}

\newtheorem{theorem}{Theorem}[section]
\newtheorem{lemma}[theorem]{Lemma}

\allowdisplaybreaks[4]
\usepackage{caption}
\begin{document}

\title{Stability  for the anti-Ramsey number of matchings\thanks{Research supported by National Key Research and Development Program of China 2023YFA1010203, National Natural Science Foundation of China under grant 12271425}} 

\author{Xuechun Zhang \thanks{Corresponding email: xjtu\_xczhang@163.com (X. Zhang)}
  and Hongliang Lu\\
\small School of Mathematics and Statistics\\
Xi'an Jiaotong University \\ \small  Xi'an, Shaanxi 710049, P.R.China }

\date{}

\maketitle

\date{}

\begin{abstract}
Let $n, r, s$ be three positive integers such that $n\geq 2s+5$. Let $K_r$ denote the complete graph of order $r$. Given a graph $F$, the anti-Ramsey number $ar(n,F)$ is defined as  the minimum number $C$ such that any edge-coloring of  $K_n$ with exactly $C$ colors contains a
rainbow copy of $F$.
Let  $H$ be an edge-colored graph on $K_n$ with at least $g(n,s)$ colors, where
\[
 g(n,s)=\max\left\{ \binom{n}{2} - \binom{n - s + 1}{2} + 5, \binom{2s - 1}{2} + n + 1 \right\}.
\] 
In this paper, we establish a stability type result for the anti-Ramsey number of matchings. Specifically, if  $H$
 does not have a rainbow matching of size $s+2$, then $H$ contains either a monochromatic complete graph $K_{n-s}$ or a monochromatic \( K_{n - 2s - 1} \vee \overline{K_{2s + 1}} \).
\end{abstract}

{\bfseries \noindent Keywords}: Anti-Ramsey number; Stability; Matching

\section{Introduction}
We begin with some standard definitions.  Consider a simple graph $G$ with vertex set $V(G)$ and edge set $E(G)$. The \emph{order} and \emph{size} of $G$ are denoted by $n = |V(G)|$ and $e(G) = |E(G)|$, respectively. Given an edge subset $E'$, we use  $G - E'$ to represent the graph whose vertex set is  $V(G)$, and whose edge set is  $E(G)-E'$.  When $E'=\{e\}$, we use $G-e$ to denote $G-E'$. Given a vertex subset $S\subseteq V(G)$, the graph $G - S$ is defined as the subgraph of $G$ whose vertex set is $V(G)-S$, and whose edge set consists of all the edges in $E(G)$ that do not have any endpoints in $S$. If $S=\{v\}$, we use $G-v$ to denote $G-S$. The \emph{degree} of a vertex $v$ in $G$, denoted by $d_G(v)$, is the number of edges
of $G$ incident with $v$. For a vertex set $S$, the \emph{neighborhood} $N_G(S)$ is the set of vertices adjacent to some vertex in $S$; that is, $N_G(S) = \{ x \mid xy \in E(G) \text{ and } y \in S \}$. For vertex subsets $S$ and $T$, let $E_G(S,T)$ denote the set of edges with one end in $S$ and the other in $T$. Let $e_G(S,T): = \lvert E_G(S,T) \rvert$.
The maximal connected subgraphs of $G$ are its \emph{connected components}.
The complete graph on $p$ vertices is denoted by $K_p$.  A \emph{matching} in $G$ is a set of edges no two of which are incident, and the \emph{matching number} $\nu(G)$ is the size of a largest matching in $G$. A matching of size $k$ is denoted by $M_k$.
Let $G_1 = (V_1, E_1)$ and $G_2 = (V_2, E_2)$ be two graphs such that $V_1 \cap V_2 = \emptyset$. The \emph{union} of graphs $G_1$ and $G_2$, denoted by $G_1 \cup G_2$, is the graph with vertex set $V_1 \cup V_2$ and edge set $E_1 \cup E_2$. The \emph{join} of graphs $G_1$ and $G_2$, denoted by $G_1 \vee G_2$, is the graph union $G_1 \cup G_2$ together with all the edges joining $V_1$ and $V_2$. Given two integers $n_1$  and $n_2$  with
$n_1\leq n_2$, we use the notation $[n_1,n_2]$ to denote the set $\{n_1,\ldots,n_2\}$. For the sake of simplicity, when $n_1=1$, we write $[n_2]$ instead of $[n_1,n_2]$.


A graph $F$ is \emph{rainbow} if all edges have distinct colors. The \emph{anti-Ramsey number} $ar(n, F)$ is  the smallest integer $m$ such that each edge-coloring of  $K_n$ with exactly $m$ colors contains a rainbow copy of $F$.  A \emph{rainbow matching} is a matching where all edges have distinct colors. 

For a family of graphs \( \mathcal{F} \), the \emph{Tur\'an number} $ex(n, \mathcal{F})$ is the maximum possible number of edges in an $n$-vertex graph which does not contain any $F\in \mathcal{F}$ as a graph. When $\mathcal{F} = \{F\}$, we write $ex(n, F)$ for $ex(n, \mathcal{F})$. The value of $ar(n, G)$ is closely related to the Tur\'an number $ex(n, \mathcal{G})$, where $\mathcal{G} = \{G - e : e \in E(G)\}$, via the following inequality \cite{ESS}:
\[
2 + ex(n, \mathcal{G}) \leq ar(n, G) \leq 1 + ex(n, G).
\]
The upper bound follows directly from the definition of the Tur\'an number. For the lower bound, consider a $\mathcal{G}$-free graph $H$ with $ex(n, \mathcal{G})$ edges. Color the edges of $H$ with distinct colors, and use one additional color for all edges in its complement $\overline{H}$. This colored graph with $1 + ex(n, \mathcal{G})$ colors and  no rainbow copy of $G$.

Erd\H{o}s, Simonovits, and S\'os~\cite{ESS} proved that for any fixed integer $l\geq 2$ and sufficiently large $n$, the anti-Ramsey number is $ar(n, K_{l+1}) = ex(n,K_l)+2$. This result was later refined by Montellano-Ballesteros and Neumann-Lara \cite{MN}, who showed that it holds for all $n\geq l+1$. Schiermeyer~\cite{S} was the first to apply the counting technique to the matching problem, proving that \( ar(n, M_s) = ex(n, M_{s-1}) + 2 \) for \( s \geq 2 \) and \( n \geq 3s + 3 \). This was later improved by Fujita et al. \cite{FKSS}, who extended the range to \( n \geq 2s + 1 \). The problem was finally resolved by Chen, Li, and Tu \cite{CLT} for all \( s \geq 2 \) and \( n \geq 2s \), giving the exact value of \( ar(n, M_s) \). Independently, Haas and Young \cite{HY} obtained the same result for the case \( n = 2s \) by a simpler argument. In the present paper, our objective is to establish a stability result concerning the anti-Ramsey number of matchings.
 \begin{theorem}\label{ARM}
 Let \( n \) and \( s \) be positive integers satisfying \( n \geq \max \{2s + 5, 40\} \). Let \( H \) be an edge-colored graph  $K_n$ with at least $g(n,s)$ colors, where $g(n,s)$   is defined as
 \begin{align}\label{extremal graph}
 g(n,s)=\max\left\{ \binom{n}{2} - \binom{n - s + 1}{2} + 5, \binom{2s-1}{2} + n + 1 \right\}.
\end{align}
 If \( H \) contains no rainbow matching of size \( s + 2 \),  then \( H \) contains either a monochromatic \( K_{n - s } \) or a monochromatic \( K_{n - 2s - 1} \vee \overline{K_{2s + 1}} \).
\end{theorem}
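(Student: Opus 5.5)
The approach is to pass to a representing subgraph and then analyse its structure with the Gallai--Edmonds / Tutte--Berge formula. Choose one edge of each colour to obtain a rainbow spanning subgraph $G\subseteq K_n$ with $e(G)=c(H)\ge g(n,s)$. Among all such choices, pick $G$ to maximise the matching number $\nu(G)$. Since $H$ has no rainbow $M_{s+2}$, we have $\nu(G)\le s+1$. The classical Erd\H{o}s--Gallai bound gives $ex(n,M_{s+1})=\max\{\binom{2s+1}{2},\binom{s}{2}+s(n-s)\}$. Comparing this with $g(n,s)$, note that $\binom n2-\binom{n-s+1}2=\binom{s-1}{2}+(s-1)(n-s+1)$. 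So $g(n,s)$ lies just below the $M_{s+1}$-free extremal number in the "$s$ universal vertices" regime. Consequently $\nu(G)=s+1$ is possible and must be handled, while $\nu(G)\ge s+2$ is excluded.

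Apply Tutte--Berge to $G$: there is a set $S$ with $\nu(G)=\frac12(n-(o(G-S)-|S|))$, where $o(\cdot)$ counts odd components. With $\nu(G)\le s+1$ and the edge bound $e(G)\ge g(n,s)$, the standard optimisation of $e(G)$ over $|S|=k$ and component sizes shows that only two extremal shapes survive.
\begin{itemize}
\item[(a)] $k\in\{s,s+1\}$, essentially all components of $G-S$ trivial; the unused pairs among the $n-s$ non-$S$ vertices then span one large clique of "missing" pairs.
\item[(b)] $k=0$ (or very small) with one big component of order about $2s+1$ to $2s+3$; the missing pairs lie between this component and the remaining $n-2s-1$ vertices, and among those vertices.
\end{itemize}
The slack of $+5$ in the first term and $+n+1$ in the second term is what leaves only $O(1)$ deviation from the extremal configuration. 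The hypothesis $n\ge\max\{2s+5,40\}$ is used to kill intermediate $k$ via convexity of the edge-count function.

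The main step, and the real obstacle, is converting the structure of $G$ into information about colours of edges outside $G$. For a non-edge $uv$ of $G$, colour $c(uv)$ equals $c(e)$ for a unique $e\in E(G)$. Replacing $e$ by $uv$ gives another rainbow representing graph $G'$, which by maximality still satisfies $\nu(G')\le s+1$. I will show that if $e$ is not "isolated" in the appropriate sense, this swap, combined with an augmenting path argument in the near-extremal structure, produces a rainbow $M_{s+2}$.

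In case (a), use the edges between $S$ and the independent side: $G$ contains near-complete bipartite pieces, so each $S$-vertex has many private partners. Then any two vertex-disjoint non-edges of $G$ inside $V\setminus S$ that receive different colours, or a colour different from the colour of some spare edge, extend an $s$-matching on $S$ to $M_{s+2}$. This forces all pairs inside $V\setminus S$ to share one colour, giving a monochromatic $K_{n-s}$ (after possibly moving a vertex of $S$ if $k=s+1$).

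In case (b), the same swapping argument on the $n-2s-1$ outside vertices and their pairs to the big component forces a monochromatic $K_{n-2s-1}\vee\overline{K_{2s+1}}$.

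I expect the delicate part to be the bookkeeping when the colour of a non-edge coincides with an edge of $G$ inside $S$ or inside the big component. Here I would first try to choose $G$ also minimising the number of such "internal" representatives, and exploit that the $+5$ surplus guarantees at least two independent spare edges to reroute the matching.
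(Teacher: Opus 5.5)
There is a genuine gap. Your skeleton matches the paper's: a rainbow representing graph $G$, the Tutte--Berge/Gallai--Edmonds shape $G\subseteq K_t\vee\bigl(\bigcup_i K_{2k_i+1}\bigr)$ with $t+\sum_i k_i=s+1$, and convexity in $t$ to discard $4\le t\le s-2$. But the reduction you state after it is inaccurate, and the step you defer is in fact the whole proof. Three corrections to the reduction:
(i) Counting only gives $e(G)>ex(n,M_s)$, i.e.\ $\nu(G)\ge s$. To get $\nu(G)=s+1$, even for your $\nu$-maximal choice, you must invoke $ar(n,M_{s+1})=ex(n,M_s)+2\le g(n,s)$.
(ii) The values surviving the count are $t\in\{0,1,2,3,s-1,s,s+1\}$, not just your (a) and (b). For example, $t=s-1$ with $G\subseteq K_{s-1}\vee(K_5\cup\overline{K_{n-s-4}})$ allows $g_1(n,s)+5$ edges, and for $t=2,3$ the big component has order $2s-1$ and $2s-3$.
(iii) There is no ``$O(1)$ deviation''. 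Since $g_1(n,s)$ exceeds $e(K_{s-1}\vee\overline{K_{n-s+1}})$ by only $5$, for $t=s+1$ the graph $G$ can be $K_{s-1}\vee\overline{K_{n-s+1}}$ plus five edges joining two independent vertices $v_1,v_2$ to other independent vertices (at least one at each). This is $2n-2s-6$ edges short of $K_{s+1}\vee\overline{K_{n-s-1}}$.

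That example is exactly where your plan breaks. You rely on ``each $S$-vertex has many private partners'', but the edge bound controls only $\sum_{v\in S}e_G(v,V\setminus S)$; above, $v_1$ may have a single $G$-neighbour outside $S$. This low-degree vertex is where the monochromatic $K_{n-s}$ lives: it is $\{v_1\}\cup(V\setminus S)$, not the pairs inside $V\setminus S$. For $t=s$ and $t=s-1$ the correct outcome is a contradiction (for $t=s$, $V\setminus S$ spans a triangle of $G$, so it is never monochromatic), so the argument must genuinely build an $M_{s+2}$ there.

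The missing ingredient is a degree-sequence form of Hall's theorem, which the paper uses as follows:
\begin{enumerate}
\item Order $S$ by $e_G(v_i,V\setminus S)$.
\item Fix a small rainbow matching of $2$ or $3$ edges, chosen to use the low-degree vertices when $t=s,s+1$.
\item Delete its vertices and the few $G$-edges carrying its colours.
\item Prove, by a convex counting bound against $g_1,g_2$, that the $i$-th remaining vertex of $S$ still has at least $i$ neighbours, and apply Lemma~\ref{lemma}.
\end{enumerate}
If no suitable rainbow $3$-matching exists in $H[\{v_1,v_2\}\cup(V\setminus S)]$, the off-colour edges form a star at $v_2$, and that is what yields the monochromatic $K_{n-s}$. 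Your ``swap plus augmenting path'' and ``minimise internal representatives'' ideas do not supply any of this.

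Similarly, for small $t$ the paper applies Erd\H{o}s--Gallai inside $G[T\cup V(C_1)]$ to find a rainbow $M_s$ avoiding two prescribed colours, and applies the anti-Ramsey bound to $H-V(C_1)$. Your (b) does not say which $n-2s-1$ vertices form the clique (for $t=1$ it contains a vertex of $T\cup V(C_1)$), nor why all cross edges receive the common colour.
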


Define $g_1(n,s):=\binom{n}{2} - \binom{n - s + 1}{2} + 5$ and $g_2(n,s):=\binom{2s - 1}{2} + n + 1$. 
 Observe that the function $g(n,s)$
 takes the value of $g_1(n,s)$
 when the variable $n$
 satisfies the inequality $n\geq \frac{5}{2}s+\frac{3}{2}$,  while it takes the value of
  $g_2(n,s)$ when  when
$n$  lies in the interval $2s+5\leq n\leq \frac{5}{2}s+\frac{3}{2}$. 

The bounds in Theorem \ref{ARM} are tight, which we verify by constructing explicit edge-colorings of \(K_n\) as follows:

First, let \(n \geq \frac{5}{2}s + \frac{3}{2}\). Define the subgraph $F_1$ of $ K_n$ as \(F_1 := K_{s-1} \vee (K_3 \cup \overline{K_{n-s-2}})\), with edge set \(E(F_1) = \{e_1, \ldots, e_{g_1(n,s)-2}\}\). We define an edge-coloring \(f_1: E(K_n) \to [g_1(n,s)-2] \cup \{0\}\) and denote the colored graph as \(H_1\), where:
\[
f_1(e) =
\begin{cases}
i & \text{if } e = e_i \in E(F_1), \\
0 & \text{otherwise}.
\end{cases}
\]
It is straightforward that \(H_1\) contains no rainbow matchings of size \(s+2\) and no monochromatic \(K_{n-s}\).

Next, let \(2s+5 \leq n \leq \frac{5}{2}s + \frac{3}{2}\). Define the subgraph $F_2$ of $K_n$ as \(F_2 := K_1 \vee (K_{2s-1} \cup \overline{K_{n-2s}})\), with edge set \(E(F_2) = \{e_1, \ldots, e_{g_2(n,s)-2}\}\). We define an edge-coloring \(f_2: E(K_n) \to [g_2(n,s)-2] \cup \{0\}\) and denote the colored graph as \(H_2\), where:
\[
f_2(e) =
\begin{cases}
i & \text{if } e = e_i \in E(F_2), \\
0 & \text{otherwise}.
\end{cases}
\]
It is straightforward that \(H_2\) contains no rainbow matchings of size \(s+2\) and no monochromatic \(K_{n-2s-1} \vee \overline{K_{2s+1}}\).

The rest of this paper is organized as follows. In Section 2, we present some preliminary technical lemmas required for our proof. The proof of Theorem \ref{ARM} is provided in Section 3.

 \section{Preliminaries}
 We now present the preliminary results required for the proof of our main result. First, we need the classical result of Erd\H{o}s and Gallai~\cite{EG} on the Tur\'an number of matchings.
 \begin{theorem}[Erd\H{o}s and Gallai, \cite{EG}]\label{Matching}
 	For $n \geq 2s+2$ and $s \geq 1$,
 \begin{align}
 	ex(n,M_{s+1})= \max \left\{e(G(n,s)), {2s+1\choose 2}\right\},
 \end{align}
 where $G(n, s)$ is the graph $K_s \vee \overline{K_{n-s}}$, consisting of a clique on $s$ vertices, each connected to all vertices of an independent set on $n-s$ vertices.
\end{theorem}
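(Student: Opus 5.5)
The plan is to prove matching upper and lower bounds. The lower bound comes from two explicit constructions, and the upper bound from the Tutte--Berge formula followed by a convexity argument. For the lower bound, $G(n,s)=K_s\vee\overline{K_{n-s}}$ has no $M_{s+1}$, since every edge meets the $s$-clique. Likewise $K_{2s+1}\cup\overline{K_{n-2s-1}}$ has no $M_{s+1}$, since all its edges lie inside $2s+1$ vertices. Both graphs have $n$ vertices because $n\ge 2s+2$, so $ex(n,M_{s+1})$ is at least the stated maximum.

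For the upper bound, let $G$ be an $n$-vertex graph with $\nu(G)\le s$ and the maximum number of edges. By the Tutte--Berge formula there is a set $S\subseteq V(G)$, with $|S|=k$, such that
\[
\nu(G)=\tfrac12\bigl(n+k-o(G-S)\bigr),
\]
where $o(G-S)$ is the number of odd components of $G-S$. Hence $q:=o(G-S)\ge n+k-2s$.

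Next we saturate $G$ in a way that does not increase this Tutte--Berge deficiency bound, so that $\nu\le s$ is preserved:
\begin{itemize}
\item make $S$ a clique and join it completely to $V(G)-S$;
\item make every component of $G-S$ a clique;
\item absorb every even component into an odd one, which keeps the number of odd components unchanged;
\item while $q>n+k-2s$, merge three odd components into one odd component, which lowers $q$ by $2$.
\end{itemize}
After these steps we may assume that $G-S$ consists of exactly $q=n+k-2s$ odd cliques of sizes $n_1,\dots,n_q$ with $\sum n_i=n-k$. Here $0\le k\le s$, since $q\ge 1$ when $n>2s$. Then
\[
e(G)=\binom{k}{2}+k(n-k)+\sum_{i=1}^q\binom{n_i}{2}.
\]

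The key optimization step is the sum $\sum_{i=1}^q\binom{n_i}{2}$, subject to each $n_i$ being odd and at least $1$, with $\sum n_i=n-k$. By convexity of $\binom{x}{2}$, moving vertices in pairs from a smaller part to a larger one does not decrease the sum and preserves parity. So the maximum is attained with $q-1$ singletons and one clique of size $n-k-(q-1)=2s-2k+1$. This gives
\[
e(G)\le h(k):=\binom{k}{2}+k(n-k)+\binom{2s-2k+1}{2}.
\]

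The function $h(k)$ is a quadratic in $k$ with leading coefficient $-\tfrac12-1+2=\tfrac32>0$, so it is convex on $[0,s]$. Its maximum is therefore at an endpoint. At $k=0$ we get $h(0)=\binom{2s+1}{2}$, and at $k=s$ we get $h(s)=\binom{s}{2}+s(n-s)=e(G(n,s))$. This matches the lower bound and proves the formula.

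The main obstacle is the saturation step. One must check carefully that each modification keeps $\nu\le s$, which can be verified directly with the same set $S$ in the Tutte--Berge bound. One must also confirm that merging odd components to reach exactly $q=n+k-2s$ never needs more components than exist; this works because the quantities $q$ and $n+k-2s$ have the same parity.
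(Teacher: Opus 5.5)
The paper gives no proof of this statement; it quotes it from Erd\H{o}s and Gallai. Your argument is correct, and it follows the same Berge-formula, clique-saturation and convexity scheme that the paper itself uses in Section 3. Claim 1 there shifts pairs of vertices from a smaller odd clique into the largest one, exactly as in your optimization step. The convex function $f(n,t)$ there is your $h(k)$ with $s$ replaced by $s+1$, since $\binom{n}{2}-\binom{n-t}{2}=\binom{t}{2}+t(n-t)$. Your explicit absorption of even components is, if anything, more careful than the paper's setup. The paper simply asserts $G\subseteq K_t\vee\bigcup_i K_{2k_i+1}$ with $t+\sum_i k_i=s+1$, without addressing possible even components of $G-T$.

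There are two small slips to fix. First, the $k^2$-coefficient of $h$ is $\tfrac12-1+2=\tfrac32$: the contribution of $\binom{k}{2}$ is $+\tfrac12$, not $-\tfrac12$. Second, $k\le s$ does not follow from $q\ge 1$. It follows from $q\le n-k$, because each odd component contains at least one vertex; this gives $n+k-2s\le n-k$.
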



We need the following result on anti-Ramsey number of matching.
\begin{theorem}[\cite{S,FKSS,CLT}]\label{anti-matching}
	\begin{align}
		ar(n, M_s) = \begin{cases}
			4, & \text{if } n = 4 \text{ and } s = 2, \\
			ex(n, M_{s-1}) + 3, & \text{if } n = 2s \text{ and } s \geq 7, \\
			ex(n, M_{s-1}) + 2, & \text{otherwise}.
		\end{cases}
	\end{align}
\end{theorem}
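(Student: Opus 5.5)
The equality of Theorem~\ref{anti-matching} needs a lower bound from explicit colorings and a matching upper bound. I would follow the counting and Gallai--Edmonds strategy of Schiermeyer and Chen--Li--Tu.

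\textbf{Lower bound.} I would use the construction recalled in the introduction. Take an $M_{s-1}$-free graph $F$ on $n$ vertices with $ex(n,M_{s-1})$ edges; by Theorem~\ref{Matching} it can be taken to be $K_{s-2}\vee\overline{K_{n-s+2}}$ or $K_{2s-3}\cup\overline{K_{n-2s+3}}$. Color $E(F)$ rainbow and give every edge of $\overline{F}$ one extra color. A rainbow $M_s$ uses at most one edge of the extra color, so at least $s-1$ of its edges are a matching in $F$, which is impossible. This gives $ar(n,M_s)\ge ex(n,M_{s-1})+2$.

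For $n=2s$, $s\ge 7$ I would need one more color. I would start from the rainbow $K_{2s-3}$ and color the edges meeting the remaining three vertices $x,y,z$ with two new colors. These two colors must be assigned to a structure (for instance, according to which of $x,y,z$ the edge meets) so that every perfect matching uses two edges of the same new color. The argument should rest on the fact that a perfect matching of $K_{2s}$ must cover all of $x,y,z$ using at least two edges meeting $\{x,y,z\}$. The comparison $\binom{2s-3}{2}\ge\binom{s-2}{2}+(s-2)(s+2)$, which is exactly where $s\ge7$ enters, makes this count as $ex+3$. The case $n=4$, $s=2$ is a direct check.

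\textbf{Upper bound.} Suppose a coloring of $K_n$ uses $ex(n,M_{s-1})+2$ colors (or $+3$ when $n=2s$) and has no rainbow $M_s$. Pick one edge of each color to form a rainbow representing graph $G$. Among all such choices, take $G$ with $\nu(G)$ maximal and then with some secondary extremal parameter, for example the number of vertices covered. Since $e(G)>ex(n,M_{s-1})$, Theorem~\ref{Matching} gives $\nu(G)\ge s-1$, and the assumption forces $\nu(G)=s-1$.

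Next apply the Gallai--Edmonds decomposition $V=D\cup A\cup C$ to $G$. By Tutte--Berge, $s-1=\tfrac12\bigl(n-\mathrm{odd}(G-A)+|A|\bigr)$, and $e(G)$ is bounded by $\binom{|A|}{2}+|A|(n-|A|)+\sum_i\binom{|D_i|}{2}+\binom{|C|}{2}$. I would then use the extremality of $G$ for an exchange argument. Any edge $uv$ of $K_n$ not in $G$ whose endpoints are unmatched by a maximum matching (for example, two distinct factor-critical components, or a component together with an exposed vertex) must have a color already used on $G$. Replacing that representative of the color by $uv$ must not create a rainbow $M_s$. This forces many colors to be concentrated, and yields strictly more structure than the bare Erdős--Gallai count.

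Combining the exchange constraints with the edge count should show that $e(G)\le ex(n,M_{s-1})+1$, a contradiction. For $n=2s$ the bound should be $ex+2$ instead, reflecting the three-vertex slack.

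\textbf{Main obstacle.} I expect the hardest step to be the case analysis on the sizes of $A$ and of the odd components. This is especially delicate near $n=2s$, where there are few exposed vertices and swapping arguments have little room. That regime is exactly where the extra $+1$ arises, and where Chen--Li--Tu needed the most work. There I would first try to show that all but one component of $G-A$ are singletons, and then handle the large component by a separate rainbow-augmenting-path argument.
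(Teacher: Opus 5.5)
The paper does not prove this theorem. It quotes it from Schiermeyer, Fujita et al., and Chen--Li--Tu (with Haas--Young for $n=2s$), and it only ever invokes the upper bound $ar(n,M_s)\le ex(n,M_{s-1})+2$ in the ``otherwise'' range.

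Your lower bound is fine in substance. The rainbow extremal graph plus one extra color is the standard argument, and your comparison $\binom{2s-3}{2}\ge\binom{s-2}{2}+(s-2)(s+2)$, which holds exactly when $s\ge 7$, is correct. However, the example rule you give for $n=2s$ fails as literally stated. Suppose every edge meeting $x$ gets color $\alpha$ and the remaining edges meeting $\{y,z\}$ get $\beta$. Take $xy$, an edge $zw$ with $w$ in the vertex set $X$ of the rainbow $K_{2s-3}$, and a perfect matching of $X-w$. Together these form a rainbow $M_s$. A rule that works is: color $\alpha$ on all edges from $x$ to $X$ and on $yz$, and color $\beta$ on every other edge meeting $\{x,y,z\}$. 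Since $|X|$ is odd, a perfect matching either sends each of $x,y,z$ into $X$ (colors $\alpha,\beta,\beta$), or uses one edge inside $\{x,y,z\}$ plus one edge from the third vertex into $X$. In each of those three patterns the two edges share a color.

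The genuine gap is the upper bound, which is the entire content of the cited results. Your framework is the right one: a representing graph $G$ with $\nu(G)=s-1$, the Gallai--Edmonds decomposition, and single swaps $G-e_c+uv$. But the decisive inequality $e(G)\le ex(n,M_{s-1})+1$ (resp.\ $+2$ for $n=2s$) is only asserted (``should show'').

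Knowing $\nu(G)=s-1$ gives, via Erd\H{o}s--Gallai, only $e(G)\le ex(n,M_s)$. The difference $ex(n,M_s)-ex(n,M_{s-1})$ is linear; for example it equals $n-s+1$ when both extremal graphs have the form $K_k\vee\overline{K_{n-k}}$. So you must show that the swap constraints cost $G$ linearly many edges. A single swap tells you only this: when $\nu(G+uv)=s$, the representative $e_c$ of the color of $uv$ lies in every $s$-matching of $G+uv$.

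The proposal does not supply what is needed to get from there to the bound:
\begin{itemize}
\item You give no mechanism for aggregating these swap constraints over all such edges $uv$.
\item You do not specify the secondary extremal parameter. The primary one, maximizing $\nu(G)$, is vacuous, since every representing graph has $\nu=s-1$ exactly.
\item The case analysis on $|A|$ and on the component sizes is left entirely open.
\item The regime $n=2s$, where the extra color appears, is not addressed.
\end{itemize}

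As written, this half is a research plan rather than a proof. In the context of this paper, the appropriate treatment is the citation the authors give.
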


A graph $G$ is \emph{factor-critical} if $G-v$ has a perfect matching for every vertex $v$ of $G$. Note that if $G$ is factor-critical, then it has odd order.
We need to use the strengthened version of Berge's formula, which can be obtained from Berge Formula and Gallai-Edmonds Structure Theorem\cite{B, YL}.

\begin{theorem}[Berge~\cite{B}]\label{BFormula}
	For any graph $G$, the matching number is given by
	\begin{align}\label{Berge}
		\nu(G) = \frac{1}{2}\left(|V(G)| - \max_{S \subseteq V(G)} \{odd(G-S) - |S|\}\right),
	\end{align}
	where $odd(G-S)$ denotes the number of factor-critical components in $G-S$.
\end{theorem}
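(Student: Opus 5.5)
The plan is to prove the two inequalities separately: the upper bound on $\nu(G)$ by a direct counting argument, and the matching lower bound by exhibiting a specific set $S$ supplied by the Gallai--Edmonds Structure Theorem. Throughout I assume the classical Tutte--Berge formula, in which $odd(G-S)$ counts all components of odd order, and the Gallai--Edmonds Structure Theorem, both as in \cite{B, YL}.

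\textbf{Upper bound.} Fix any $S\subseteq V(G)$ and any matching $M$. A factor-critical component $C$ of $G-S$ has odd order. Hence $M$ cannot cover $V(C)$ using only edges inside $C$. So either some vertex of $C$ is left uncovered, or some vertex of $C$ is matched into $S$. Each vertex of $S$ serves at most one component in this way. Therefore at least $odd(G-S)-|S|$ vertices are uncovered by $M$, which gives
\[
2|M|\le |V(G)|-(odd(G-S)-|S|).
\]
Maximizing over $S$ yields $\nu(G)\le \frac12\bigl(|V(G)|-\max_S\{odd(G-S)-|S|\}\bigr)$. Note that this bound is weaker than, and follows from, the usual Tutte--Berge bound, since factor-critical components are among the odd components.

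\textbf{Lower bound.} We need one set $S$ attaining equality, and we take the Gallai--Edmonds decomposition. Let $D$ be the set of vertices missed by at least one maximum matching. Let $A=N_G(D)\setminus D$, and let $C=V(G)\setminus(D\cup A)$. The Gallai--Edmonds theorem gives three facts:
\begin{itemize}
\item[(i)] every component of $G[D]$ is factor-critical;
\item[(ii)] $G[C]$ has a perfect matching, so each component of $G[C]$ has even order and is not factor-critical;
\item[(iii)] every maximum matching consists of a perfect matching of $G[C]$, a near-perfect matching of each component of $G[D]$, and edges matching all of $A$ into distinct components of $G[D]$.
\end{itemize}
Set $S=A$. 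By (i) and (ii), the factor-critical components of $G-A$ are exactly the components of $G[D]$. By (iii), the number of vertices missed by a maximum matching is $c(G[D])-|A| = odd(G-A)-|A|$. This gives
\[
\nu(G)=\tfrac12\bigl(|V(G)|-(odd(G-A)-|A|)\bigr),
\]
so equality holds.

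The only substantive step is the appeal to Gallai--Edmonds, specifically fact (i) (Gallai's lemma that components of $G[D]$ are factor-critical) and the structure in fact (iii). These are precisely what upgrade ``odd components'' to ``factor-critical components''. Since both results are cited, I expect no real obstacle beyond checking that the even components in $C$ are correctly excluded from the count.
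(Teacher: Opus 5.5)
Your proposal is correct and takes the route the paper indicates. The paper gives no separate proof; it only remarks that the statement follows from Berge's formula together with the Gallai--Edmonds Structure Theorem. That is exactly what you do: the upper bound holds because factor-critical components are odd components, and equality is attained at $S=A$ from the Gallai--Edmonds decomposition. The one step you leave tacit is that $G-A$ is the disjoint union of $G[D]$ and $G[C]$ with no edges between them, which is immediate from $A=N_G(D)\setminus D$.
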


A fundamental result characterizing the existence of perfect matchings in bipartite graphs is provided by Hall's Marriage Theorem.
\begin{theorem}[Hall~\cite{H}]\label{Hall}
	 Let $G$ be a bipartite graph with bipartition $
	V(G)=A \cup B$  and such that $|N_G(S)|\geq|S|$ for all $S\subseteq A$. Then $G$ has a matching covering $A$.
\end{theorem}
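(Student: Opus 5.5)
We prove Theorem~\ref{Hall} by strong induction on $|A|$, following the classical Halmos--Vaughan argument. The case $|A|=1$ is immediate: the single vertex $a\in A$ has $|N_G(\{a\})|\ge 1$, so some edge $ab$ covers $A$. For $|A|\ge 2$, assume the statement for all bipartite graphs with a smaller side $A'$ satisfying Hall's condition. We split according to whether Hall's condition holds with slack on every proper nonempty subset.

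\emph{Case 1: $|N_G(S)|\ge |S|+1$ for every nonempty $S\subsetneq A$.} Pick any $a\in A$ and any neighbour $b\in N_G(\{a\})$, which exists since $|N_G(\{a\})|\ge 1$. Let $G'=G-\{a,b\}$ with sides $A-\{a\}$ and $B-\{b\}$. For any nonempty $S\subseteq A-\{a\}$ we have $S\subsetneq A$, so
\[
|N_{G'}(S)|\ge |N_G(S)|-1\ge |S|.
\]
By induction $G'$ has a matching covering $A-\{a\}$. Adding the edge $ab$ gives a matching covering $A$.

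\emph{Case 2: there is a nonempty $S_0\subsetneq A$ with $|N_G(S_0)|=|S_0|$.}
\begin{itemize}
\item Let $G_1=G[S_0\cup N_G(S_0)]$. Every subset of $S_0$ has all its $G$-neighbours inside $N_G(S_0)$, so Hall's condition holds in $G_1$. By induction, since $|S_0|<|A|$, $G_1$ has a matching $M_1$ covering $S_0$.
\item Let $G_2=G-(S_0\cup N_G(S_0))$, with sides $A-S_0$ and $B-N_G(S_0)$. The key step is to verify Hall's condition in $G_2$. Take $T\subseteq A-S_0$. Then $N_G(T\cup S_0)=N_{G_2}(T)\cup N_G(S_0)$ is a disjoint union, so
\[
|N_{G_2}(T)|=|N_G(T\cup S_0)|-|S_0|\ge |T|+|S_0|-|S_0|=|T|.
\]
\item By induction, since $|A-S_0|<|A|$, $G_2$ has a matching $M_2$ covering $A-S_0$.
\item $M_1$ and $M_2$ are vertex-disjoint, so $M_1\cup M_2$ is a matching covering $A$.
\end{itemize}

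The only delicate point is the counting identity in Case 2. It uses tightness $|N_G(S_0)|=|S_0|$ together with the disjointness of $N_{G_2}(T)$ from $N_G(S_0)$, which holds by the construction of $G_2$. Everything else is routine bookkeeping.

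Alternatively, one could derive the result from K\H{o}nig's theorem or from max-flow min-cut. The induction above is self-contained, so it is the argument we will use.
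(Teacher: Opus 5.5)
The paper gives no proof of this statement. Hall's theorem is quoted from Hall's paper as a classical black box and used only to derive Lemma~\ref{lemma}, so there is no argument of the paper's to compare yours against.

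Your Halmos--Vaughan induction is a correct, self-contained proof. The three steps that need checking all hold:
\begin{itemize}
\item The case split is exhaustive: if the strict inequality fails for some nonempty proper $S_0$, Hall's condition forces $|N_G(S_0)|=|S_0|$.
\item In Case 1, deleting $b$ removes at most one neighbour from any nonempty $S\subseteq A-\{a\}$, and the strict inequality absorbs that loss.
\item In Case 2, the disjoint decomposition $N_G(T\cup S_0)=N_{G_2}(T)\sqcup N_G(S_0)$, together with tightness at $S_0$ and $T\cap S_0=\emptyset$, gives Hall's condition in $G_2$.
\end{itemize}

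Relative to the paper, your route buys self-containment at the cost of a paragraph. Nothing downstream depends on which proof of Hall's theorem is used.
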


By Theorem \ref{Hall}, we can derive the following lemma.
\begin{lemma}\label{lemma}
	Let $G$ be a bipartite graph with bipartition $V (G) =
	A \cup B$, where $A=[a]$ and $d_G(i)\geq i$ for all $i\in A$. Then $G$ has a matching covering $A$.
\end{lemma}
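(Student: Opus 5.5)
We deduce the statement from Hall's Theorem~\ref{Hall} by checking Hall's condition on $A=[a]$ directly from the degree hypothesis. Fix a nonempty $S\subseteq A$ and let $i=\max S$. Since $i\in S$, every neighbour of $i$ lies in $N_G(S)$, so
\[
|N_G(S)|\geq d_G(i)\geq i\geq |S|,
\]
where the last inequality holds because $S\subseteq [i]$. For $S=\emptyset$ the condition $|N_G(S)|\ge |S|=0$ is trivial. Thus $|N_G(S)|\geq |S|$ for every $S\subseteq A$, and Theorem~\ref{Hall} gives a matching of $G$ covering $A$.

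As an independent check that needs no appeal to Hall, one can also argue greedily. Process the vertices in the order $1,2,\ldots,a$. When we reach vertex $i$, exactly $i-1$ vertices of $B$ have been used. Since $d_G(i)\ge i$, at least one neighbour of $i$ in $B$ is still unused, and we match $i$ to it. By induction this produces a matching covering all of $A$.

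There is no real obstacle here. The only point to note is that the hypothesis is tied to the labelling $A=[a]$, so the degree condition should be applied to the largest-indexed vertex of $S$, which gives the bound $|S|\le \max S$.
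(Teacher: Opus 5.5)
Your Hall's-condition argument is correct and is exactly the paper's proof: take $k=\max S$, so $|N_G(S)|\geq d_G(k)\geq k\geq |S|$, and apply Theorem~\ref{Hall}. Your greedy alternative is also valid, since vertex $i$ always has at least $i-(i-1)=1$ unused neighbour, and it removes the need for Hall's Theorem altogether.
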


\noindent\textbf{Proof:}
	Let $S\subseteq A$, and let $k=\max\{i\mid i\in S\}$; so $|S|\leq k$. Since $d_G(k)\geq k$, and $k\in S$, we have
	\[
	|N_G(S)|\geq d_G(k)\geq k\geq |S|.
	\]
	So by Hall's Theorem,  $G$ has a matching covering $A$.\qed


\section{Proof of Theorem \ref{ARM} }

Let $f:E(K_n)\rightarrow [g(n,s)]$ be a surjective edge-coloring of complete graph $K_n$. We denote the edge-colored graph by $H$.
Write $V(H):=\{v_1,v_2,\ldots,v_n\}$, $e_{ij}=\{v_i, v_j\}$ and $E(H):= \{e_{ij}\ |\  1\leq i < j \leq n\}$ .     For $1 \leq i \leq g(n,s)$, let $\mathcal{F}_i$ be the set of edges in $H$ that have color $i$. Consider a rainbow subgraph $G$ of $H$ on vertex set $V(H)$ with  exactly $g(n,s)$ edges,  i.e., one satisfying  $|E(G)\cap \mathcal{F}_i|=1$ for each $i\in [g(n,s)]$.

Note that \begin{align*}
	g(n,s)&\geq\max\left\{ \binom{n}{2} - \binom{n - s+1}{2} + 5, \binom{2s - 1}{2} + n+1  \right\} \\
	&>\max\left\{ \binom{n}{2} - \binom{n - s+1}{2} + 2, \binom{2s - 1}{2} +2 \right\} \\
	&=ex(n, M_s) + 2.
\end{align*}
 By  Theorem \ref{anti-matching}, we can see that $e(G)=  g(n,s) > ar(n, M_{s+1})$. Since $H$ contains no rainbow matching of size $s+2$, we may assume that $\nu(G)=s+1$ .  By Theorem \ref{BFormula}, there exists $T\subseteq V(G)$ such that
\begin{align}\label{BB}
\nu(G)=\frac{1}{2}(|V(G)|-(odd(G-T)-|T|))
\end{align}
and hence
\begin{align}
	n-2(s+1)=odd(G-T)-|T|.
\end{align}
 Let $q = odd(G-T)$, $t = |T|$, and let $C_1, \ldots, C_q$ be the factor-critical  components of $G-T$, where $|V(C_i)| = 2k_i + 1$ and $k_1 \geq k_2 \geq \cdots \geq k_q$.
It follows that $G$ is a subgraph of $Q := K_t \vee \left( \bigcup_{i=1}^q K_{2k_i+1} \right)$ and
 \[
 t + k_1 + k_2 + \cdots + k_q = s+1.
 \]
Specifically, if $|k_i|=1$, then $C_i$ is isomorphic to $K_3$.

\medskip
\textbf{Claim 1.} For fixed $t$, we have $e(G) \leq e\left(K_t \vee \left(K_{2s+3-2t} \cup \overline{K_{n-2s-3+t}}\right)\right)$.
\medskip

Suppose $k_2 \geq 1$. Let $Q' = K_t \vee \left(K_{2k_1+3} \cup K_{2k_2-1} \cup \left(\bigcup_{i=3}^q K_{2k_i+1}\right)\right)$. Then
\begin{align*}
	e(Q') = e(Q) - (4k_2 - 1) + (2k_1 + 1) + (2k_1 + 2) = e(Q) - 4k_2 + 4k_1 + 4 > e(Q).
\end{align*}
Repeating this process yields the inequality
\begin{align*}
	e(G) \leq e(Q) \leq e\left(K_t \vee \left(K_{2s+3-2t} \cup \overline{K_{n-2s-3+t}}\right)\right).
\end{align*}
This completes the proof of Claim 1.

Define
\begin{align*}
	f(n,t) := \binom{n}{2} - \binom{n-t}{2} + \binom{2s+3-2t}{2}.
\end{align*}
By Claim 1, we deduce that $e(G) \leq f(n,t)$. The function
\begin{align*}
	f(n,t) = \frac{3}{2}t^2 + \left(n - 4s - \frac{11}{2}\right)t + 2s^2 + 5s + 3
\end{align*}
is convex for $t \in [0, s+1]$.

Let   $T = \{v_1,v_2,\ldots,v_t\}$ when $t \geq 1$. Suppose that $C_1, C_2, \ldots, C_q$ have vertex sets $\{v_{t+1}, \ldots, v_{2k_1 + t + 1}\}$, $\{v_{2k_1 + t + 2}, \ldots, v_{2k_1 + 2k_2 + t + 2}\}$, $\ldots$, $\{v_{n - 2k_q}, \ldots,v_n\}$, respectively. Without loss of generality, assume the vertices in $T$ are sorted so that
\begin{align}\label{order}
	e_G(v_1, G-T) \leq e_G(v_2, G-T) \leq \cdots \leq e_G(v_t, G-T).
\end{align}

\medskip
\textbf{Claim 2.} $t\in\{1, 2, 3, s-1, s, s+1\}$.

By contradiction, suppose that the result does not hold. Firstly, consider $t=0$. We claim that $G$ is a subgraph of $K_{2s+3} \cup \overline{K_{n-2s-3}}$ or $K_{2s+1} \cup K_3 \cup \overline{K_{n-2s-4}}$. Otherwise, we have
\begin{align*}
	e(G) &\leq e(K_{2s-1} \cup K_5 \cup \overline{K_{n-2s-4}}) \\
	&= \binom{2s-1}{2} + 10 \\
	&< g_2(n, s),
\end{align*}
a contradiction. Now, let $e\in \{e_{ij}\in E(\overline{G})\ | \ 2s+5\leq i< j\leq  n\}$. Then there exists an edge $e' \in E(G)$ with $f(e') = f(e)$. Since all components of $G$ are factor-critical, $G-e'$ has a  matching $M$ of size $s+1$ in $G - e'$. Consequently, $M \cup \{e\}$ is a rainbow matching of size $s+2$ in $H$, a contradiction.

Next we assume that $4 \leq t \leq s-2$. If $n \geq \max \{\frac{5}{2}s + \frac{3}{2}, 40\}$, then
	\begin{align*}
		f(n,t) &\leq \max \{f(n,4), f(n,s-2)\} \\
		&= \max\left\{4n+2s^2-11s+5,\ ns-2n-\tfrac{1}{2}s^2+\tfrac{3}{2}s+20\right\}\\
		&< g_1(n,s).
	\end{align*}
If $40\leq n < \frac{5}{2}s + \frac{3}{2}$, then $s>15$ and
	\begin{align*}
		f(n,t) &\leq \max \{f(n,4), f(n,s-2)\} \\
		&= \max\left\{4n+2s^2-11s+5,\ ns-2n-\tfrac{1}{2}s^2+\tfrac{3}{2}s+20\right\}\\
		&= 4n+2s^2-11s+5 \\
		&< g_2(n,s).
	\end{align*}
Thus, for $4 \leq t \leq s-2$, we have
\[
e(G)\leq \max\{f(n,4), f(n,s-2)\} < \max\left\{g_1(n,s), g_2(n,s)\right\},
\]
a contradiction.

\medskip
\textbf{Claim 3.~}If $t\in\{1,2,3,s,s+1\}$, then $G$ is a subgraph of $K_t\vee (K_{2s-2t+3}\cup \overline{K_{n-2s+t-3}})$; else if $t=s-1$, then $G$ is a subgraph of $K_{s-1}\vee (K_5\cup\overline{ K_{n-s-4}})$ or $G\subseteq K_{s-1}\vee (2K_3\cup \overline{K_{n-s-5}})$.

If $t\in \{s+1, s\}$, then by Theorem \ref{BFormula}, $k_2=0$ and $k_1=s+1-t$; so the result is followed. Consider $t=s-1$. By Theorem \ref{BFormula}, we have $\sum_{i\in [q]}k_i=2$.
One can see that $k_1=k_2=1$ and $k_3=0$ or $k_1=2$ and $k_2=0$. Thus we may infer that $G$ is a subgraph of $K_{s-1}\vee (K_5\cup\overline{ K_{n-s-4}})$ or $ K_{s-1}\vee (2K_3\cup \overline{K_{n-s-5}})$.

Next we may assume that $t\in\{1,2,3\}$, $t\leq s-2$ and $s\geq 4$. Suppose that $G$ is not a subgraph of $K_t\vee (K_{2s-2t+3}\cup \overline{K_{n-2s+t-3}})$. Then we have $k_2\geq 1$.

Firstly, consider $t=1$.  If $k_2\geq 2$ or $k_3\geq 1$, then we have
\begin{align*}
e(G)&\leq e\left(K_1 \vee (K_{2s-3} \cup K_5 \cup \overline{K_{n-2s-3}})\right)\\
&\leq {2s-3\choose 2}+n+9<g_2(n,s),
\end{align*}
a contradiction.
So  we may infer that $k_2=1$ and $k_3=0$. Then $G$ is a subgraph of $K_1 \vee (K_{2s-1} \cup K_3 \cup \overline{K_{n-2s-3}})$, and
\begin{align*}
	g(n,s)\leq e(G) &\leq e\left(K_1 \vee (K_{2s-1} \cup K_3 \cup \overline{K_{n-2s-3}})\right) \\
	&= \binom{2s-1}{2} + n + 2 \\
	&= g_2(n, s) + 1.
\end{align*}
Thus $G$ differs from  $K_1 \vee (K_{2s-1} \cup K_3 \cup \overline{K_{n-2s-3}})$ by at most one edge.
Let $e\in \{e_{ij}\in E(\overline{G})\ |\ 2s+4\leq i<j\leq n\}$. Then there exists an edge $e' \in E(G)$ such that $f(e') = f(e)$. 
Since the components of $G - T$ are factor-critical, it follows that there is a rainbow matching $M$ of size $s$ in $G-T-e'$. Moreover,  
\[
d_G(v_1)\geq g_2(n,s)-\binom{2s-1}{2}-3 \geq n-2.
\]
Therefore, there exists an edge $e''=\{v_1, v'\}$ such that $v'\notin V(M\cup \{e\})$ and $f(e'')\neq f(e)$.
Then $M \cup \{e,e''\}$ forms a rainbow matching of size $s+2$ in $H$, a contradiction.

Secondly, consider $t=2$.
By Theorem~\ref{BFormula}, $G$  is a subgraph of
\[ K_2 \vee \left( \bigcup_{i=1}^q K_{2k_i+1} \right) \quad \text{with} \quad \sum_{i=1}^{q} k_i = s-1. \]
If $k_2\geq 1$,  by following the same approach as in Claim~1,  we have
\begin{align*}
	e(G)&\leq e(K_2 \vee \left(K_{2s-3} \cup K_3\cup \overline{K_{n-2s-2}}\right))\\
	&=\binom{2s-3}{2} + 3 + 2(n-2) + 1 \\
	&= 2n + 2s^2 - 7s + 6 \\
	&< \max\{g_1(n,s), g_2(n,s)\},
\end{align*}
a contradiction.
Thus, we may assume that $k_2 = 0$. Then $G$ is a subgraph of $K_2 \vee \left(K_{2s-1} \cup \overline{K_{n-2s-1}}\right)$.



Finally, we assume that $t=3$ and $s\geq 5$. By Theorem~\ref{BFormula}, $G$ is a subgraph of
\[ K_3 \vee \left( \bigcup_{i=1}^q K_{2k_i+1} \right) \quad \text{with} \quad \sum_{i=1}^{q} k_i = s-2. \]
If $k_2\geq 1$, then we have
\begin{align*}
	e(G)&\leq e(K_3 \vee \left(K_{2s-5} \cup K_3\cup \overline{K_{n-2s-1}}\right))\\
	&=\binom{2s-5}{2} + 3 + 3(n-3) + 3 \\
	&= 3n + 2s^2 - 11s + 12 \\
	&< \max\{g_1(n,s), g_2(n,s)\},
\end{align*}
a contradiction.

Next we discuss the following five cases.

\medskip
\textbf{Case 1.} $t=1$.

According to Claim 3, 
$G$  is a subgraph of the graph
\[
K_1 \vee (K_{2s+1}\cup\overline{K_{n-2s-2}}).
\]

\textbf{Subcase 1.1.} $|\{f(e_{ij})\ |\ 2s+3\leq i<j\leq n\}|\geq 2$.

%

First, let $n\geq 2s+7$. There exist two vertex-disjoint edges $e_1, e_2$ with $V(\{e_1, e_2\})\subset \{v_{2s+3},\ldots, v_n\}$ such that $f(e_1)\neq f(e_2)$. Let $V_1:=\{v_1,v_2,\ldots,v_{2s+2}\}$. Then $G[V_1]$ contains at least
\[
g_2(n,s) - (n - 2s - 2) - 2 = 2s^2 - s + 2
\]
edges $h$ such that $f(h)\notin \{f(e_1),f(e_2)\}$.  Note that $ex(2s+2, M_s) = \binom{2s-1}{2} < 2s^2 - s + 2$. By Theorem~\ref{Matching}, there exists a rainbow matching $M$ of size $s$ in $G[V_1]$ such that for any $h\in M$, $f(h)\notin \{f(e_1),f(e_2)\}$. Then $M\cup \{e_1, e_2\}$ is a rainbow matching of size $s+2$ in $H$, a contradiction.

Next, let $n=2s+6$. There exist two vertex-disjoint edges $e_1, e_2$ with $v_1\in V(\{e_1\})$ and $V(\{e_1, e_2\})\subset \{v_1, v_{2s+3}, v_{2s+4}, v_{2s+5}, v_{2s+6}\}$ such that $f(e_1)\neq f(e_2)$. Then $G[V_1]-v_1$ contains at least
\[
g_2(n,s)-(n-1)-1=\binom{2s-1}{2}+1
\]
edges $h$ such that $f(h)\notin \{f(e_1), f(e_2)\}$. Since $ex(2s+1, M_s)=\binom{2s-1}{2}<\binom{2s-1}{2}+1$, by Theorem 2.1, there exists a rainbow matching $M$ of size $s$ in $G[V_1]-v_1$ such that  for any $h\in M$, $f(h)\notin \{f(e_1), f(e_2)\}$. Then $M\cup \{e_1, e_2\}$ is a rainbow matching of size $s+2$ in $H$, a contradiction.

Now consider $n=2s+5$. Without loss generality, suppose that $f(e_{2s+3, 2s+4})=c$ and $f(e_{2s+4, 2s+5})=c'$. 
Define 
\[
G':=\left\{
     \begin{array}{ll}
       G[V_1]-e, & \hbox{if there exists an edge \(e\in E(G[V_1])\) such that \(f(e)=c\);} \\
       G[V_1], & \hbox{otherwise.}
     \end{array}
   \right.
\]
 Since there is no rainbow matching of size $s+2$ in $H$, it follows that $\nu(G')\leq s$. If $G'$ contains no isolated vertices, then we have
\begin{align*}
e(G)&\leq e(G')+3+1\\
&\leq e(K_1\vee (K_{2s-1}\cup 2K_1))+4\\
&\leq {2s\choose 2}+2+4\\
&={2s-1\choose 2}+n\quad\mbox{(since $n= 2s+5$)},
\end{align*}
contradicting $e(G)\geq g_2(n, s)$. So we may assume that
$G'$ contains an isolated vertex $v_x$. For any   $v_i\in V_1- v_x$, we claim $f(e_{xi})=c$. Otherwise, suppose that
 there exists $v_y\in V_1-v_x$ such that  $f(e_{xy})\neq c$.  Then $G[V_1]-v_x-v_y$ contains at least
\begin{align}\label{sub-case1-eq1}
g_2(n,s)-3-2s-2=\binom{2s-1}{2}+1>ex(2s, M_s)
\end{align}
edges $h$ such that $f(h)\notin \{c, f(e_{xy})\}$. 
By Theorem \ref{Matching}, $G[V_1]-v_x-v_y$  contains a rainbow matching $M$ of size $s$ such that for any $h\in M$, $f(h)\notin \{c, f(e_{xy})\}$. 
Then $M\cup \{e_{2s+3, 2s+4}, e_{xy}\}$ is a rainbow matching of size $s+2$ in $H$, a contradiction.  
Fix $v_i\in V_1-v_x$. Then $\{e_{xi}, e_{2s+4, 2s+5}\}$ is a rainbow matching of size 2. By inequality (\ref{sub-case1-eq1}) and  Theorem \ref{Matching}, $G[V_1]-v_x-v_i$ 
contains a rainbow matching $M$ of size $s$ in $G[V_1]-v_x-v_i$ such that for any $h\in M$, $f(h)\notin \{c, c'\}$. Then $M\cup \{e_{2s+4, 2s+5}, e_{xi}\}$ is a rainbow matching of size $s+2$ in $H$, a contradiction again.

\medskip
\textbf{Subcase 1.2.} $|\{f(e_{ij})\ |\ 2s+3\leq i<j\leq n\}|=1$.


Let $f(e_{ij})=c$ for all $\{i,j\}\subset [2s+3, n]$. Similarly, we define $G'$ as follows: 
\[
G'=\left\{
     \begin{array}{ll}
       G[V_1]-e, & \hbox{if there exists an edge \(e\in E(G[V_1])\) such that \(f(e)=c\);} \\
       G[V_1], & \hbox{otherwise.}
     \end{array}
   \right.
\]
  Since $H$ contains no rainbow matching of size $s+2$, we know that  $\nu(G')\leq s$.
If $G'$ contains no isolated vertices, then we have
\begin{align*}
e(G)&\leq e(G')+(n-2s-2)+1\\
&\leq e(K_1\vee (K_{2s-1}\cup 2K_1))+(n-2s-2)+1\\
&\leq {2s-1\choose 2}+n,
\end{align*}
contradicting $e(G)\geq g_2(n, s)$. So we may assume that
 $G'$ contains an isolated vertex $v_x$.

  We claim that $G'-v_x$ is factor-critical. Otherwise, there exists $v_y\in V(G')-v_x$ such that $G'-v_x-v_y$ contains no perfect matchings. Then 
\begin{align*}
e(G)&\leq e(G')+1+(n-2s-2)\\
&=e(G'-v_x)+1+(n-2s-2)\\
&\leq e(G'-v_y)+2s+1+(n-2s-2)\\
&< {2s-1\choose 2}+n,\quad \mbox{(by Theorem~\ref{Matching})}\\
\end{align*}
a contradiction.

We firstly show that $H-(V(G')-v_x)$ is a monochromatic graph  in $H$. Suppose, for the sake of contradiction, that $H-(V(G')-v_x)$ is not monochromatic. Then there exist  two vertex-disjoint edges $e_1,e_2$ in $H-(V(G')-v_x)$   such that $f(e_1)\neq f(e_2)$ and $f(e_1)=c$. Since $G'-v_x$ is factor-critical, $G'-v_x$ has a matching $M$  of size $s$ avoiding color $f(e_2)$. Then $M\cup \{e_1,e_2\}$ is a matching of size $s+2$, a contradiction.

Next, it suffices to show that \(f(e_{uv}) = c\) for all edges \(e_{uv}\) with \(u \in [2s+2] - \{x\}\) and \(v \in [2s+3, n] \cup \{x\}\). Suppose, for the sake of contradiction, that this is not the case.
Then there exist $u\in [2s+2]- \{x\}$ and $v\in [2s+3, n ]\cup \{x\}$ such that  $f(e_{uv}) \neq c$. Let $e_3\in E(G)$ such that $f(e_3) = f(e_{uv})$. We select  an edge $e'=e_{ij}$ with $\{i,j\}\subseteq ([2s+3, n]\cup \{x\}) - \{v\} $, $f(e')= c$, and an edge $e'' \in E(G)$ with  $f(e'') = c$. Since  $v_x$ is an isolated vertex in $G'$ and $H-(V(G')-v_x)$ is  monochromatic, we have
\begin{align*}
&e(G[V_1] - \{v_x,v_u\}-\{ e_3, e''\}) \\
&\geq e(G[V_1])-2-2s\\
&\geq g_2(n,s)-(n-2s-2) - 2s - 2 \\
&> \binom{2s-1}{2}.
\end{align*}
Hence, by Theorem~\ref{Matching}, 
$G[V_1] -\{v_x,v_u\} -\{ e_3, e''\}$ has a  matching $M$ of size $s$. Then $M \cup \{e_{uv}, e'\}$ forms a rainbow matching of size $s+2$ in $H$, a contradiction.
This completes the proof of Case 1.

\medskip
\textbf{Case 2.} $t = s-1$.

According to Claim 3, 
$G$  is  a subgraph of either $K_{s-1} \vee (K_5 \cup \overline{K_{n-s-4}})$ or $K_{s-1} \vee (2K_3 \cup \overline{K_{n-s-5}})$.
 Let $e:= e_{n-1,n}$. Then there exists an edge $e' \in E(G)$ such that $f(e') = f(e)$. Since the components of $G - T$ are factor-critical, we have $\nu(G - T - e') = 2$. 
Let $M$ be a matching of size $2$ in $G - T - e'$. Let $G'$ be a bipartite graph with bipartition $(T,B)$ and edge set $E_{G-e'}(T,B)$,  where
 $B:=\{v_{s},v_{s+1},\ldots, v_{n-2}\}-V(M)$.
Then  for any $v_i\in T$, we know that
\begin{align}\label{diff1}
	e_G(\{v_i\}, V(G-T)) \leq e_{G'}(v_i, B) + 7.
\end{align}

\medskip
\textbf{Claim 4.~}$e_{G'}(\{v_i\}, B)\geq i$ for all $v_i\in T$.

Suppose for contradiction that there exists $v_j\in T$ such that $e_{G'}(\{v_j\}, B)\leq j - 1$. By (\ref{order}) and (\ref{diff1}), one can see that  $e_{G}(v_k, V(G-T))\leq j + 6$ 
for all $k\in [j]$. So we have
\begin{align*}
	\sum_{k=1}^je_G(\{v_k\}, V(G-T)) &\leq j(j+6).
\end{align*}
Note that $e_G(\{v_{j+1},\ldots,v_{s-1}\}, V(G-T))\leq (s-1 - j)(n- s + 1)$ and   $e(G-T)\leq 10$. It follows that
\begin{align*}
	e(G)\leq \binom{s-1}{2} + (s - 1- j)(n-s + 1) + j(j+6) + 10
	:=h_1(n, s, j).
\end{align*}
The function $h_1(n, s, j)$ is convex for $j$ in the interval $[s-1]$.  So we have $h_1(n, s, j)\leq \max\{h_1(n, s, 1), h_1(n, s, s-1)\}$. By $n\geq \max \{2s+5, 40\}$, we have
\begin{align*}
	h_1(n, s, 1)=\binom{s-1}{2} + (s - 2)(n-s+1) + 17
	<g_1(n,s).
\end{align*}
For $j=s-1$, we have
\begin{align*}
	h_1(n, s, s-1)&=\binom{s-1}{2} + (s-1)(s+5) + 10\\
	&=\frac{3}{2}s^2 + \frac{5}{2}s + 6<g_2(n,s).
\end{align*}
Thus,
$e(G)\leq  \max \{h_1(n, s, 1), h_1(n, s, s-1)\}<  \max \{g_1(n,s), g_2(n,s)\}$, a contradiction. This completes the proof of Claim 4.

  According to Lemma \ref{lemma} and Claim 4, $G'$ contains a matching $M'$ of size $s-1$. Then $M' \cup M\cup \{e\}$ forms a rainbow matching of size $s+2$ in $H$,  a contradiction. This completes the proof of Case 2.

%
%
%

\medskip
\textbf{Case 3.} $t = s+1$.

According to Claim 3, 
$G$  is a subgraph of $K_{s+1} \vee \overline{K_{n-s-1}}.$ Let $V_2:=\{v_1,v_2\}\cup \{v_{s+2},v_{s+3}$, $\ldots, v_n\}$ and $M$ be a maximum rainbow matching in $H[V_2]$. Recall that $\nu(G)=s+1$. We know that $\nu(G[V_2])=2$. It follows that $|M|\geq 2$.

\textbf{Subcase 3.1.} $|M|=2$.

Then there exist a color $c$ and an edge set $E_1\subseteq E(H[V_2])$ with $f(e)=c$ for all $e\in E_1$ such that $\nu(H[V_2] \setminus E_1) = 1$. Let $E_2:=E(H[V_2])-E_1$. 
We know that $H[E_2]$ is a triangle or a star. 
Note that
\begin{align*}
	e(G[V_2]) &\geq  g_1(n, s)-\binom{s-1}{2} - (s-1)(n-s+1)\\
 &\geq \binom{n}{2} - \binom{n-s+1}{2} + 5- \binom{s-1}{2} - (s-1)(n-s+1) =  5,
\end{align*}
that is
\begin{align}\label{case3-eq1}
	e(G[V_2]) > 5.
\end{align}
If $H[E_2]$ is a triangle, then  $|\{f(e)| \ e\in E(H[V_2])\}|=4$, which contradicts  (\ref{case3-eq1}). Thus we may assume that $H[E_2]$ is star with at least $5$ vertices. 
Since $G[V_2]$ is a subgraph of $K_2\vee \overline{K_{n-s-1}}$ and $e(G[V_2])\geq 5$, by (\ref{order}), it follows that $e_G(\{v_1\}, V(G-T))\geq 1$ and $e_G(\{v_2\},V(G-T))\geq 3$. 
Therefore, $H[V_2]-v_2$ is a  monochromatic  copy of  $K_{n-s}$  with color $c$ in $H$.

\medskip
\textbf{Subcase 3.2.~}$|M|\geq 3$.

Then, we can choose a rainbow matching $\mathcal{M}$ of size 3 in $H[V_2]$ such that $V(\mathcal{M})\cap \{v_1,v_2\}\neq \emptyset$. 
We define $\mathcal{C}:=\{f(e)\ |\ e\in \mathcal{M}\}$ and  $E_1:=\{e\in E(G)\ |\ f(e)\in \mathcal{C}\}$. By the definition of $G$, 
we know that $|E_1|=3$. Let $A:=\{v_3,v_4,\ldots,v_{s+1}\}$ and $B:=\{v_{s+2}, v_{s+3},\ldots, v_n\}-V(\mathcal{M})$.  Let $G'$ be a bipartite graph with bipartition $(A,B)$ and edge set  $E_G(A,B)-E_1$.

\medskip
\textbf{Claim 5.~}$e_{G'}(v_i, B)\geq i - 2$ for all $v_i\in A$.

 Suppose, for contradiction, that there exists $v_j\in A$ such that $e_{G'}(v_j, B)\leq j - 3$. Recall that $|V(M)\cap \{v_1,v_2\}|\neq \emptyset$, so $e_{G-E_1}(v_j, B)\leq j+2$. 
 Since $|E_1|= 3$, by (\ref{order}), we have
\begin{align*}
	\sum_{k=1}^je_G(\{v_k\}, V(G-T))\leq \sum_{k=1}^je_{G-E_1}(\{v_k\}, V(G-T)) +3&\leq j(j+2) + 3.
\end{align*}
It follows that
\begin{align*}
	 e(G)&\leq \sum_{k=1}^je_G(\{v_k\}, V(G-T))+ \binom{s+1}{2} + (s + 1 - j)(n-s-1)\\
&\leq \binom{s+1}{2} + (s + 1 - j)(n-s-1)+ j(j+2) + 3.
\end{align*}
Let $h_2(n, s, j):= \binom{s+1}{2} + (s + 1 - j)(n-s-1)+ j(j+2) + 3.$
The function $h(n, s, j)$ is convex for $j$ in the interval $[3, s+1]$, and so $h_2(n, s, j)\leq \max\{h_2(n, s, 3), h_2(n, s, s+1)\}$. 
Given $n\geq \{2s+5, 40\}$,  we have
\begin{align*}
h_2(n, s, 3)&=\binom{s+1}{2} + (s - 2)(n-s-1) + 18\\
&=ns- 2n - \frac{1}{2}s^2 + \frac{3}{2}s + 20<g_1(n,s),
\end{align*}
and
\begin{align*}
	h_2(n, s, s+1)&=\binom{s+1}{2} + (s+1)(s+3) + 3\\
	&=\frac{3}{2}s^2 + \frac{9}{2}s + 6<g_2(n,s).
\end{align*}
Thus
$e(G)<  \max \{g_1(n,s), g_2(n,s)\}$,  a contradiction. This completes the proof of Claim 5.

 By Lemma \ref{lemma} and Claim 5,  $G'$ contains a matching $M'$ of size $s-1$. Then $M \cup M'$ forms  a rainbow matching of size $s+2$ in $H$,  a contradiction. This completes the proof of Case 3.

\medskip
\textbf{Case 4.} $t=s$.

According to Claim 3, 
$G$  is a subgraph of the graph $
K_{s} \vee (K_3\cup\overline{K_{n-s-3}}).
$ 
Since the components of $G-T$ are factor-critical and $\nu(G)=s+1$, one can see that $G-T\cong K_3\cup \overline{K_{n-s-3}}$. Note that
\begin{align*}
e_G(\{v_1\}, V(G-T)) &\geq e(G)- (s-1)(n-s)-{s\choose 2}-3 \\
&\geq \binom{n}{2} - \binom{n-s+1}{2} + 5 - \binom{s}{2} - (s-1)(n-s)-3 =  2.
\end{align*}
Let $V_3:=\{v_1\}\cup \{v_{s+1}, v_{s+2}, \ldots, v_n\}$. Define $G':=G[V_3]$.  Note that $e(G')\geq 5$ and $G'$ contains a triangle. Thus  for any 
$e\in E(G')$, $G'-e$ contains a matching  of size $2$  covering $v_1$. 
 It follows that $H[V_3]$ has a rainbow matching $\mathcal{M}$ of size $3$ such that $v_1\in V(\mathcal{M})$ and $|E(G-T) \cap \mathcal{M}|=1$.   Without loss generality, we may assume that $\mathcal{M}:=\{e_{1u}, e_{s+1,s+2}, e_{n-1,n}\}$ and $e_{1u}\in E(G)$.
Let $e'\in E(G)$ such that $f(e')=f(e_{n-1,n})$.
Let $A=T-v_1$ and $B=V(G-T) - V(M)$. Let $F$ be a bipartite graph with partition $V(F)=A\cup B$, whose edge set is $E_G(A,B)-e'$.
 Then we have that for all $v_i\in A$,
\begin{align}\label{differ1}
	e_G(\{v_i\}, V(G-T)) \leq e_{F}(\{v_i\}, B) + 6.
\end{align}


\medskip
\textbf{Claim 6.~}$e_{F}(\{v_i\}, B)\geq i - 1$ for all $v_i\in A$.

 Suppose, for contradiction, that there exists $v_j\in A$ such that $e_{F}(\{v_j\}, B)\leq j - 2$. By (\ref{order}) and (\ref{differ1}), we can deduce that $e_{G}(\{v_k\}, V(G-T))\leq j + 4$ for all $k\in [j]$. Thus, we have
\begin{align*}
	\sum_{k=1}^je_G(\{v_k\}, V(G-T)) &\leq j(j+4).
\end{align*}
It follows that
\begin{align*}
	e(G)& \leq \sum_{k=1}^je_G(\{v_k\}, V(G-T))+\sum_{k=j+1}^se_G(\{v_k\}, V(G-T))+3\\
&\leq \binom{s}{2} + (s-j)(n-s) + j(j+4) + 3.
\end{align*}
Let $h_3(n,s,j):=\binom{s}{2} + (s-j)(n-s) + j(j+4) + 3$. Then $h_3(n,s,j)$
 is   convex for $j$  in the interval $[2,s]$. Thus, $h_3(n,s,j)\leq \max \{h(n,s,2), h(n,s,s)\}$. Then we have
\begin{align*}
	h_3(n,s,2)= \binom{s}{2} + (s-2)(n-s) + 15< g_1(n,s),
\end{align*}
and
\begin{align*}
	h_3(n,s,s) =\binom{s}{2} +  s(s+4) + 3 <g_2(n, s).
\end{align*}
 Consequently, $e(G)\leq \max \{h_3(n,s,2), h_3(n,s,s)\} < \max \{g_1(n,s), g_2(n,s)\}$,  a contradiction. This completes the proof of Claim 6.

%

 By Lemma \ref{lemma} and Claim 6, $F$ has  a matching $M'$ of size $s-1$. Then $M \cup M'$ forms a rainbow matching of size $s+2$ in $H$,  a contradiction. This completes the proof of Case 4.

\medskip
\textbf{Case 5.} $t\in \{2,3\}$.

According to Claim 3,
$G$  is a subgraph of $K_t \vee (K_{2s-2t+3} \cup \overline{K_{n-2s+t-3}}).$ Recall that $V(C_1)=\{v_{t+1}, v_{t+2},\ldots,v_{2s-t+3}\}$.
Let $H':=H-V(C_1)$. 

\medskip
\textbf{Claim 7.~} $H'$ contains a rainbow matching of size $t$.

Suppose, for the sake of contradiction, that this is not the case.
 
When $t = 2$, $H'$ is a monochromatic graph. Let $f(e)=c$ for all $e\in E(H')$. We aim to show there is a monochromatic $K_{n-2s-1}\vee \overline{K_{2s+1}}$ in $H$. It suffices to show that $f(e_{uv})=c$ for all edges $e_{uv}$ with $u\in [3, 2s+1]$ and $v\in [2s+2, n]$. Otherwise, suppose there exist $u\in [3, 2s+1]$ and $v\in [2s+2, n]$ such that $f(e_{uv})\neq c$. Let $e_1\in E(G)$ such that $f(e_1)=f(e_{uv})$. We select an edge $e_2=e_{i,j}$ with $\{i,j\}\subseteq [2s+2,n]-\{v\}$, $f(e_2)=c$, and an edge $e_3\in E(G)$ with $f(e_3)=c$. Let $V_4:=\{v_1,v_2,\ldots, v_{2s+1}\}$. Then we have 
\begin{align*}
	&e(G[V_4]-\{v_u\}-\{e_1, e_3\})\\
	&\geq e(G[V_4])-2s-2\\
	&\geq g_2(n,s)-2s-2\\
	&>\binom{2s-1}{2}.
\end{align*}
Hence, by Theorem \ref{Matching}, $e(G[V_4]-\{v_u\}-\{e_1, e_3\})$ has a matching $M$ of size $s$. Then $M\cup \{e_{uv}, e_2\}$ forms a rainbow matching of size $s+2$ in $H$, a contradiction.

When $t = 3$, by Theorem \ref{anti-matching}, $H'$ contains at most $n - 2s + 3$ colors. In this situation, 
 the number of edges in $G$ satisfies 
 \[
 e(G)\leq \binom{2s - 3}{2}+ n - 2s + 3 + 3(2s-3)=n+2s^2-3s<g_2(n,s),
 \]
a contradiction. This completes the proof of Claim 7.

Consider that
 $H'$ contains a rainbow matching $\mathcal{M}_1$ of size $t+1$. Define $\mathcal{C}_1:=\{f(e)\ |\ e\in \mathcal{M}_1\}$ and $E_1:=\{e\in E(G)\ |\ f(e)\in \mathcal{C}_1\}$.
Since $H$ contains no rainbow matching of size $s+2$, then we know that $\nu(C_1-E_1)\leq s-t$.  Then, by Theorem  \ref{Matching}, we have
\begin{align*}
e(C_1)-(t+1)\leq \max\{{2s-2t+1\choose 2},(s-t)(s-t+3)+{s-t\choose 2}\}.
\end{align*} It follows that
\begin{align*}
e(G)&\leq e(C_1)+t(n-t)+{t\choose 2}\\
&\leq \max\{{2s-2t+1\choose 2},(s-t)(s-t+3)+{s-t\choose 2}\}+(t+1)+t(n-t)+{t\choose 2}.
\end{align*}
Define
\[
p_1(n,s,t):=\max\{{2s-2t+1\choose 2},(s-t)(s-t+3)+{s-t\choose 2}\}+(t+1)+t(n-t)+{t\choose 2}.
\]
If $s-t\geq 3$, then we get
\[
p_1(n,s,t):={2s-2t+1\choose 2}+(t+1)+t(n-t)+{t\choose 2}<\max\{g_1(n,s),g_2(n,s)\},
\]
 a contradiction.
Otherwise, $s-t=2$. Since $n\geq \{2s+5, 40\}$, then we have
\[
p_1(n,s,t):=12+t+t(n-t)+{t\choose 2}<g_1(n,s),
\]
a contradiction.

Next, we assume that $H'$ contains no  rainbow matching  of size $t+1$. Recall that $n\geq 2s+5. $ 
 By Theorem \ref{anti-matching}, $H'$ has at most $(t-1)(n-2s+t-2)+{t-1\choose 2}+1$ colors.
By Claim 7, we may select a rainbow matching $\mathcal{M}_2$ of size $t$ in $H'$ such that $T-V(\mathcal{M}_2)\neq \emptyset $. Let $v_u\in T-V(\mathcal{M}_2)$ and define $\mathcal{C}_2:=\{f(e)\ |\  e\in \mathcal{M}_2\}$. Also, let $\mathcal{M}_2':=\{e\in E(C_1)\ |\ f(e)\in \mathcal{C}_2\}$.
Then we have $\nu(G[V(C_1)\cup \{v_u\}]-\mathcal{M}_2')\leq s-t+1$.   So, by Theorem \ref{Matching}, we get
\[
e(G[V(C_1)\cup \{v_u\}])\leq {2s-2t+3\choose 2}+|\mathcal{M}_2'|.
\]
It follows that
\begin{align*}
 e(G)&\leq e(G[V(C_1)\cup \{v_u\}])+(t-1)(2s-2t+3)+(t-1)(n-2s+t-2)+{t-1\choose 2}+1-|\mathcal{M}_2'|\\
&\leq {2s-2t+3\choose 2}+(t-1)(2s-2t+3)+(t-1)(n-2s+t-2)+{t-1\choose 2}+1\\
&={2s-2t+3\choose 2}+(t-1)(n-t+1)+{t-1\choose 2}+1.
\end{align*}
Let
\[
p_2(n,s,t):={2s-2t+3\choose 2}+(t-1)(n-t+1)+{t-1\choose 2}+1.
\] Then $p_2(n,s,2)=g_2(n,s)-1$ and
\[
p_2(n,s,3)={2s-3\choose 2}+2n-2<\max\{g_1(n,s),g_2(n,s)\}.
\]
So for $t\in \{2,3\}$, $e(G)<p_2(n,s,t)<g(n,s)$, a contradiction. This completes the proof. \qed

\end{document}